\documentclass[a4paper,11pt]{article}
\usepackage{stmaryrd}
\usepackage{bbm}
\usepackage[top=2.5cm, bottom=2.6cm, left=3cm, right=2.6cm]{geometry}
\usepackage{amsfonts}
\usepackage{mathrsfs}
\usepackage{amsmath}
\usepackage{latexsym}
\usepackage{color}
\usepackage{amscd}
\usepackage{amssymb}
\usepackage{amsthm}
\usepackage{latexsym}
\usepackage{indentfirst}


\newtheorem{theorem}{Theorem}[section]

\newtheorem{lemma}{Lemma}[section]

\begin{document}

\title{Affine cellularity of affine Brauer algebras}
\author{Weideng Cui }
\date{}
\maketitle \abstract{We show that the affine Brauer algebras are affine cellular algebras in the sense of Koenig and Xi.}\\

\thanks{{Keywords}: Brauer algebras; Colored Brauer diagrams; Affine Brauer algebras; Affine cellular algebras} \large

\medskip
\section{Introduction}

In order to approach the fundamental problem of classifying the irreducible representations of a given finite-dimensional algebra, the concept of cellularity, defined by Graham and Lehrer in [GL], has proven extremely useful. Examples of cellular algebras include many finite-dimensional Hecke algebras.

Recently, Koenig and Xi in [KX3] has generalized this concept to algebras over a noetherian domain $k$ of not necessarily finite dimension, by introducing  the notion of an affine cellular algebra. The most important class of examples of affine cellular algebras, which has been discussed in [KX2], is given by the extended affine Hecke algebras of type $A.$ Recently, Guilhot and Miemietz have proved that affine Hecke algebras of rank two with generic parameters are affine cellular in [GuM]; Kleshchev and Loubert have proved that KLR algebras of finite type are affine cellular in [KL]. In [C1] and [C2], we have shown that the BLN-algebras which have been introduced by McGerty in [Mc], and the affine $q$-Schur algebras which have been defined by Lusztig in [L], are affine cellular algebras.

The Brauer algebra is introduced as an enlargement of the symmetric group algebra and is in Schur-Weyl duality with the orthogonal or symplectic group. The BMW algebra, which is introduced in [BW] and [Mu], can be considered as a deformation of the Brauer algebra by replacing the symmetric groups by their Hecke algebras. In [MW], they have constructed a basis of the BMW algebra, which is indexed by Brauer $n$-diagrams. In [Xi], using this basis, he has shown that the basis is in fact a cellular basis and thus the BMW algebra is cellular (see also [E]). 

The cyclotomic Brauer algebra is a corresponding enlargement of the complex reflection group algebra of type $G(m, 1, n).$ This has been introduced by H\"{a}ring-Oldenburg in [HO] as a specialization of the cyclotomic BMW algebra, and has been studied by various authors (see for example [RY, RX, BCV]). In [RY], they have given a cellular basis of the cyclotomic Brauer algebra and thus it is cellular (see also [BCV]).

The affine Brauer algebra has been introduced by H\"{a}ring-Oldenburg in [HO] as a specialization of the affine BMW algebra, and has been studied in [GH]. In [GH], they have introduced the colored Brauer $n$-diagrams, using which they have defined the affine Brauer algebra. In the subsequent papers [GM] and [G], they have constructed several bases of the affine BMW algebras which are all indexed by the colored Brauer $n$-diagrams.

In this note, We will show that the affine Brauer algebras are affine cellular algebras in the sense of Koenig and Xi. Our proof relies on the fact that the group algebra of the extended affine Weyl group of type $A$ is an affine cellular algebra over $\mathbb{Z}$.

The organization of this note is as follows. In Section 2, we introduce affine cellular algebras. In Section 3, we will recall the definitions of Brauer algebras and affine Brauer algebras. In Section 4, we prove our main result Theorem 4.1.

\section{Affine cellular algebras}

Let $k$ be a noetherian domain. For a $k$-algebra $A$, a $k$-linear anti-automorphism $i$ of $A$ satisfying $i^{2}=id_{A}$ will be called a $k$-involution on $A.$ For two $k$-modules $V$ and $W,$ we denote by $\tau$ the map $V\otimes W\rightarrow W\otimes V$ given by $\tau(v\otimes w)=w\otimes v.$ If $B=k[x_{1},\ldots,x_{t}]/I$ for some ideal $I$ in a polynomial ring in finitely many variables $x_{1},\ldots, x_{t}$ over $k$, then $B$ is called an affine $k$-algebra.

\hspace*{-0.5cm}$\mathbf{Definition~2.1.}$ (see [KX3, Definition 2.1]) Let $A$ be a unitary $k$-algebra with a $k$-involution $i$. A two-sided ideal $J$ in $A$ is called an affine cell ideal if and only if the following data are given and the following conditions are satisfied:\vskip3mm
$(1)$ We have $i(J)=J.$

$(2)$ There exist a free $k$-module of finite rank and an affine $k$-algebra $B$ with a $k$-involution $\sigma$ such that $\Delta :=V\otimes_{k} B$ is an $A$-$B$-bimodule, where the right $B$-module structure is induced by the right regular $B$-module $B_{B}$.

$(3)$ There is an $A$-$A$-bimodule isomorphism $\alpha :J\rightarrow \Delta\otimes_{B}\Delta',$ where $\Delta'=B\otimes_{k}V$ is a $B$-$A$-bimodule with the left $B$-module induced by the left regular $B$-module ${}_{B}B$ and with the right $A$-module structure defined by $(b\otimes v)a :=\tau(i(a)(v\otimes b))$ for $a\in A$, $b\in B$ and $v\in V$, such that the following diagram is commutative:

\[\begin{CD}
j   @>\alpha>>\Delta\otimes_{B}\Delta'\\
@ViVV                  @VVv\otimes b\otimes_{B}b'\otimes w\mapsto w\otimes \sigma(b')\otimes_{B}\sigma(b)\otimes vV\\
J         @>\alpha>>   \Delta\otimes_{B}\Delta'
\end{CD}\].

The algebra $A$ together with its $k$-involution $i$ is called affine cellular if and only if there is a $k$-module decomposition $A=J_{1}'\oplus J_{2}'\oplus\cdots J_{n}'$ (for some $n$) with $i(J_{l}')=J_{l}'$ for $1\leq l\leq n,$ such that, setting $J_{m}: =\bigoplus_{l=1}^{m}J_{l}',$ we have a chain of two-sided ideals of $A$: $0=J_{0}\subset J_{1}\subset J_{2}\subset\cdots\subset J_{n}=A$, where each $J_{m}'=J_{m}/J_{m-1}$ ($1\leq m\leq n$) is an affine cell ideal of $A/J_{m-1}$ (with respect to the involution induced by $i$ on the quotient).\vskip3mm

To prove that an algebra is affine cellular, one canonical way is to use the definitions. The following lemma, which can be regarded as an affine version of the iterated inflation developed in [KX1] and [KX2], provides another possibility to verify the affine cellularity of a given algebra, and describes also the structures of a general affine cellular algebra.

\begin{lemma}
Let $k$ be a noetherian domain, $A$ a unitary $k$-algebra with a $k$-involution $i.$ Suppose that there is a decomposition $$A=\bigoplus_{j=1}^{m} V_{j}\otimes_{k} V_{j}\otimes_{k}B_{j}~~~(\mathrm{direct~ sums ~of}~k\mathrm{-modules})$$
where $V_{j}$ is a free $k$-module of finite rank and $B_{j}$ is an affine cellular algebra with respect to an involution $\sigma_{j}$ and a cell chain $J_{1}^{(j)}\subset J_{2}^{(j)}\subset \cdots J_{s_{j}}^{(j)}=B_{j}$ for each $j.$ Define $J_{t}=\bigoplus_{j=1}^{t} V_{j}\otimes_{k} V_{j}\otimes_{k}B_{j}.$ Assume that the restriction of $i$ on $V_{j}\otimes_{k} V_{j}\otimes_{k}B_{j}$ is given by $w\otimes v\otimes b \mapsto v\otimes w \otimes \sigma_{j}(b).$ If for each $j$ there is a bilinear form $\phi_{j} : V_{j}\otimes_{k} V_{j}\rightarrow B_{j}$ such that $\sigma_{j}(\phi_{j}(w,v))=\phi_{j}(v,w)$ for all $w, v\in V_{j}$ and that the multiplication of two elements in $V_{j}\otimes_{k} V_{j}\otimes_{k}B_{j}$ is governed by $\phi_{j}$ modulo $J_{j-1},$ that is, for $x, y, u, v\in V_{j}$ and $b, c\in B_{j},$ we have $(x\otimes y\otimes b)(u\otimes v\otimes c)=x\otimes v\otimes b\phi_{j}(y,u)c$ modulo the ideal $J_{j-1},$ and if $V_{j}\otimes V_{j}\otimes J_{l}^{(j)}+J_{j-1}$ is an ideal in $A$ for all $l$ and $j,$ then $A$ is an affine cellular algebra.
\end{lemma}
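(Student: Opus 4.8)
The plan is to build an affine cell chain for $A$ by refining the given layer filtration $0 = J_0 \subset J_1 \subset \cdots \subset J_m = A$ using the cell chains of the affine cellular algebras $B_j$. First I would record that, by hypothesis, for each $j$ and each $1 \le l \le s_j$ the $k$-submodule
$$K_{j,l} := J_{j-1} + V_j \otimes_k V_j \otimes_k J_l^{(j)}$$
is a two-sided ideal of $A$; setting $K_{j,0} := J_{j-1}$ and observing that $K_{j,s_j} = J_j = K_{j+1,0}$, these assemble into a single ascending chain of two-sided ideals of $A$ indexed lexicographically by the pairs $(j,l)$, which refines $J_\bullet$. It then suffices to prove that each subquotient $K_{j,l}/K_{j,l-1}$ is an affine cell ideal of $A/K_{j,l-1}$ in the sense of Definition 2.1.

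Second, I would identify the subquotient explicitly. Since $J_{l-1}^{(j)} \subset J_l^{(j)}$ there is a canonical $k$-module isomorphism
$$K_{j,l}/K_{j,l-1} \cong V_j \otimes_k V_j \otimes_k \bigl(J_l^{(j)}/J_{l-1}^{(j)}\bigr).$$
By the affine cellularity of $B_j$, the ideal $J_l^{(j)}/J_{l-1}^{(j)}$ is an affine cell ideal of $B_j/J_{l-1}^{(j)}$, so conditions (2) and (3) of Definition 2.1 identify it with an inflation $W_l^{(j)} \otimes_k C_l^{(j)} \otimes_k W_l^{(j)}$, where $W_l^{(j)}$ is a free $k$-module of finite rank and $C_l^{(j)}$ is an affine $k$-algebra carrying an involution $\sigma_l^{(j)}$. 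Since $k$ is a commutative domain, I may permute the tensor factors to obtain a $k$-module isomorphism
$$V_j \otimes_k V_j \otimes_k W_l^{(j)} \otimes_k C_l^{(j)} \otimes_k W_l^{(j)} \cong U \otimes_k C_l^{(j)} \otimes_k U, \qquad U := V_j \otimes_k W_l^{(j)},$$
sending $w \otimes v \otimes x \otimes c \otimes y$ to $(w \otimes x) \otimes c \otimes (v \otimes y)$. This exhibits the subquotient in the shape required of an affine cell ideal, with free module $U$ and affine algebra $C_l^{(j)}$.

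Third, I would verify conditions (1) and (2) of Definition 2.1. The involution condition is immediate: since $\sigma_j$ preserves the cell chain of $B_j$, the map $i$ stabilizes every $K_{j,l}$, and tracing the prescribed form $w \otimes v \otimes x \otimes c \otimes y \mapsto v \otimes w \otimes y \otimes \sigma_l^{(j)}(c) \otimes x$ through the identification above yields exactly $(w \otimes x) \otimes c \otimes (v \otimes y) \mapsto (v \otimes y) \otimes \sigma_l^{(j)}(c) \otimes (w \otimes x)$, the swap-and-$\sigma_l^{(j)}$ involution demanded by the cell structure. For condition (2) I would set $\Delta := U \otimes_k C_l^{(j)}$ and $\Delta' := C_l^{(j)} \otimes_k U$, with the bimodule structures over $A/K_{j,l-1}$ and $C_l^{(j)}$ inherited from those of the cell layer of $B_j$, so that $\Delta \otimes_{C_l^{(j)}} \Delta'$ recovers the subquotient.

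The main obstacle, and the computational heart of the argument, is condition (3): one must show that the multiplication in the subquotient is governed by a single bilinear form $\psi : U \otimes_k U \to C_l^{(j)}$ with $\sigma_l^{(j)}(\psi(a,b)) = \psi(b,a)$, and that the resulting $\alpha$ makes the diagram of Definition 2.1 commute. Starting from the hypothesis $(x \otimes y \otimes b)(u \otimes v \otimes c) = x \otimes v \otimes b\,\phi_j(y,u)\,c$ modulo $J_{j-1}$ and reducing further modulo $V_j \otimes_k V_j \otimes_k J_{l-1}^{(j)}$, the middle factor $b\,\phi_j(y,u)\,c$ must be rewritten via the cellular structure of $B_j$: the element $\phi_j(y,u) \in B_j$ acts on the cell layer $W_l^{(j)} \otimes_k C_l^{(j)} \otimes_k W_l^{(j)}$ through its cell datum, and the scalar so produced combines with $\phi_j$ to define $\psi$. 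Making this composition explicit and checking its $\sigma$-symmetry, using $\sigma_j(\phi_j(w,v)) = \phi_j(v,w)$ together with the symmetry of the cell form of $B_j$, is the delicate point; once it is established the commutativity of the diagram follows, and since the ideal property of each $K_{j,l}$ is supplied by hypothesis, every subquotient $K_{j,l}/K_{j,l-1}$ is an affine cell ideal and hence $A$ is affine cellular.
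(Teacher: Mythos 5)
Your proposal is correct and follows essentially the same route as the paper: refine the filtration $J_\bullet$ lexicographically by the cell chains of the $B_j$, identify each subquotient with $V_j\otimes_k V_j\otimes_k\bigl(J_l^{(j)}/J_{l-1}^{(j)}\bigr)$, and realize it as a cell layer with free module $V_j\otimes_k W_l^{(j)}$ and the same affine algebra $C_l^{(j)}$. The only difference is presentational: the paper writes down the required bimodule isomorphism $\beta$ explicitly (using a fixed nonzero vector $v_j\in V_j$ and the given isomorphism $\alpha$ for $B_j$) and leaves its verification to the reader, whereas you phrase the same identification through a bilinear form $\psi$ and likewise defer the symmetry check.
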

\begin{proof}
Since $J_{1}^{(j)}\subset J_{2}^{(j)}\subset \cdots J_{s_{j}}^{(j)}=B_{j},~ j=1,2,\ldots,m$ is a cell chain for the given affine cellular algebra $B_{j},$ we can check that the following chain of ideals in $A$ satisfies all conditions in Definition 2.1: \vskip2mm $\hspace*{0.4cm}V_{1}\otimes V_{1}\otimes J_{1}^{(1)}\subset \cdots \subset V_{1}\otimes V_{1}\otimes J_{s_{1}}^{(1)}\subset V_{1}\otimes V_{1}\otimes B_{1}\oplus V_{2}\otimes V_{2}\otimes J_{1}^{(2)}$

$\subset V_{1}\otimes V_{1}\otimes B_{1}\oplus V_{2}\otimes V_{2}\otimes J_{2}^{(2)}\subset \cdots \subset V_{1}\otimes V_{1}\otimes B_{1}\oplus V_{2}\otimes V_{2}\otimes B_{2}$

$\subset \cdots \subset \bigoplus_{j=1}^{m-1} V_{j}\otimes V_{j}\otimes B_{j}\oplus V_{m}\otimes V_{m}\otimes J_{1}^{(m)}\subset \cdots$

$\subset \bigoplus_{j=1}^{m-1} V_{j}\otimes V_{j}\otimes B_{j}\oplus V_{m}\otimes V_{m}\otimes J_{s_{m}}^{(m)}=A.$\vskip2mm

In fact, we take a fixed non-zero element $v_{j}\in V_{j}$ and suppose that $\alpha : J_{t}^{(j)}\rightarrow (V_{t}^{(j)}\otimes B_{t}^{(j)})\otimes_{B_{t}^{(j)}} (B_{t}^{(j)}\otimes V_{t}^{(j)})$ is the $B_{j}$-bimodule isomorphism in the definition of the affine cell ideal $J_{t}^{(j)}$ for the affine cellular algebra $B_{j}.$ Define $$\beta : V_{j}\otimes V_{j}\otimes J_{t}^{(j)}\rightarrow (V_{j}\otimes v_{j}\otimes V_{t}^{(j)}\otimes B_{t}^{(j)}) \otimes_{B_{t}^{(j)}} (B_{t}^{(j)}\otimes V_{j}\otimes v_{j}\otimes V_{t}^{(j)})$$$$u\otimes v\otimes x\rightarrow \sum_{l}(u\otimes v_{j}\otimes x_{l}^{(1)}\otimes b_{l}^{(1)})\otimes (b_{l}^{(2)}\otimes v\otimes v_{j}\otimes x_{l}^{(2)}),$$
where $u, v\in V_{j},$ $x\in J_{t}^{(j)}$ and $\alpha(x)=\sum_{l}(x_{l}^{(1)}\otimes b_{l}^{(1)})\otimes (b_{l}^{(2)}\otimes x_{l}^{(2)}).$ Then one can verify that $\beta$ is an $A$-bimodule isomorphism and makes the corresponding diagram in the definition of affine cell ideals commutative. Here $V_{j}\otimes V_{j}\otimes J_{t}^{(j)}$ is an affine cell ideal in the corresponding quotient of $A$. Thus $A$ is an affine cellular algebra.
\end{proof}

\section{Brauer algebras and Affine Brauer algebras}

\subsection{Brauer algebras}
In this subsection, we recall the definition of Brauer algebras and introduce some notations for the later use.

Let $n\in \mathbb{N},$ and let $\mathbb{Z}[\delta]$ be the polynomial ring in one variable $\delta$ over the integers.

The Brauer algebra $B_{\mathbb{Z}[\delta]}(n, \delta)$, or written briefly $B(n, \delta)$, has as $\mathbb{Z}[\delta]$-linear basis the set of all partitions of the set $S=\{1,2,\ldots,n,1',2',\ldots,n'\}$ into two-element subsets (here the cardinality of $S$ is $2n$). As usual we shall represent the basis element by a diagram in a rectangle of the plane, where the top row has n vertices marked by $1, 2, \ldots, n$ from left to right; and the bottom row is numbered by $1', 2',\ldots, n'$ from left to right. If $i$ and $j$ are in the same subset, we draw a line between $i$ and $j.$ We call the corresponding diagram a Brauer $n$-diagram and denote by $B_{n}$ the set of all Brauer $n$-diagrams.

The multiplication in the Brauer algebra $B(n, \delta)$ is just the concatenation of two diagrams with a coefficient counting the number of cycles produced by forming the concatenation. Let $D_{1}$ and $D_{2}$ be two Brauer $n$-diagrams, we can compose $D_{1}$ and $D_{2}$ to get another Brauer $n$-diagram $D_1 \circ D_2$ by replacing $D_1$ and $D_2$ and joining the corresponding points; interior loops are deleted. Let $r$ denote the number of deleted loops, then we define $D_1\cdot D_2=\delta^{r} D_1\circ D_2.$

We define an order on $S$ by $1< 2<\cdots < n, n'<(n-1)'<\cdots< 2'<1',$ and $i< j'$ for all $1\leq i,j\leq n.$ For $d\in B_{n},$ we write $t(d)$ for the number of `through strings', that is, vertical lines which connect a top vertex with a bottom vertex.

Generally, given an arbitrary ring $R$ with identity and an element $\delta\in R,$ we can define the Brauer algebra $B_{R}(n, \delta)$ over $R$ by using the Brauer $n$-diagrams as $R$-basis. The following result is well-known in [GL]; see also [KX2].

\begin{theorem}The Brauer algebra $B_{R}(n, \delta)$ is cellular for any ring $R$ and $\delta\in R.$\end{theorem}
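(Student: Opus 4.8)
The plan is to exhibit an explicit cellular structure on $B_R(n,\delta)$ using the combinatorics of Brauer diagrams, following the iterated inflation philosophy of [KX1, KX2] but in its classical (non-affine) form. The organizing principle is the number of through strings $t(d)$: I would stratify the set of Brauer $n$-diagrams by the value of $t(d)$, which ranges over $n, n-2, n-4, \ldots$ down to $0$ or $1$ according to the parity of $n$. For each fixed number of through strings $k$, a Brauer diagram decomposes into the data of how the top row is paired off (the ``horizontal cap'' part together with the $k$ top vertices that start a through string), how the bottom row is paired off (the ``cup'' part with $k$ bottom vertices), and a permutation in $S_k$ matching the $k$ top through-endpoints to the $k$ bottom ones. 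This gives the familiar identification of the span of diagrams with exactly $k$ through strings as $V_k \otimes V_k \otimes R[S_k]$, where $V_k$ indexes the possible top (equivalently bottom) configurations of cap/cup-and-endpoints and $R[S_k]$ is the symmetric group algebra.

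First I would set up the involution $i$ on $B_R(n,\delta)$ as the $R$-linear map that flips each diagram upside down, interchanging the top and bottom rows; this is an anti-automorphism squaring to the identity, and it acts on the stratified pieces by $w\otimes v\otimes b \mapsto v\otimes w\otimes \sigma(b)$ where $\sigma$ is the standard involution $g\mapsto g^{-1}$ on $R[S_k]$ (extended $R$-linearly). Next I would verify that the subspace $J_{\le k}$ spanned by all diagrams with at most $k$ through strings is a two-sided ideal; this is the crucial structural fact, and it follows from the observation that concatenating diagrams can only destroy through strings (turning them into cups/caps or absorbing them into loops), never create them, so multiplication never increases $t(d)$. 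These ideals give the required filtration, and the successive quotients are the inflations $V_k\otimes V_k\otimes R[S_k]$.

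Then I would produce, for each $k$, the bilinear form $\phi_k : V_k\otimes V_k \to R[S_k]$ governing the multiplication modulo lower terms. Concretely, when I multiply $(x\otimes y\otimes g)(u\otimes v\otimes h)$ by concatenating the corresponding diagrams, the result has at most $k$ through strings, and it has exactly $k$ through strings precisely when the bottom configuration $y$ of the first diagram ``matches through'' the top configuration $u$ of the second; in that case the product is $x\otimes v\otimes (g\,\phi_k(y,u)\,h)$ up to a power of $\delta$ coming from closed loops, and otherwise it lies in $J_{<k}$. I would read off $\phi_k(y,u)$ as this $\delta$-power times the permutation recording how the through strings reconnect, and check the symmetry $\sigma(\phi_k(y,u)) = \phi_k(u,y)$, which reflects compatibility with the flip involution $i$. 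Since each symmetric group algebra $R[S_k]$ is itself cellular (for instance via the Kazhdan--Lusztig or Murphy basis, or trivially as a cellular algebra over any ring), the data $(V_k, R[S_k], \sigma, \phi_k)$ assemble, via the iterated-inflation construction, into a cellular structure on $B_R(n,\delta)$.

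I expect the main obstacle to be the careful bookkeeping in the multiplication rule: verifying precisely that concatenation followed by loop-removal yields $\phi_k$ on the top stratum while all lower-order corrections genuinely land in $J_{<k}$, including tracking the powers of $\delta$ and confirming that the permutation-valued form $\phi_k$ is well defined independently of the diagram representatives chosen. Establishing that the through-string count is weakly decreasing under multiplication is conceptually easy but must be stated cleanly, since it underpins both the ideal property and the separation into principal and lower terms. Once these diagram calculations are organized, the cellularity follows formally, so the heart of the argument is combinatorial rather than algebraic.
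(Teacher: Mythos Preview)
The paper does not give its own proof of this theorem; it simply records it as a known result, citing [GL] and [KX2]. Your proposal is exactly the iterated-inflation argument of [KX2] (stratify by through strings, identify each layer with $V_k\otimes V_k\otimes R[S_k]$, use the flip involution and the concatenation-induced bilinear form, and invoke cellularity of $R[S_k]$), and it is correct. In fact, Section~4 of the paper carries out precisely the affine analogue of your outline for $\widehat{B}_{n,R}$, with the group algebra $A_k$ of the extended affine Weyl group replacing $R[S_k]$, so your approach is fully aligned with both the cited reference and the paper's own methodology.
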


\subsection{Affine Brauer algebras}
In this subsection, we will recall the definition of the affine Brauer algebras given in [GH], which can be considered as a sort of `wreath product' of $\mathbb{Z}$ with the Brauer algebras.

We define a colored Brauer $n$-diagram, which is a Brauer diagram in which each strand is endowed with an orientation and labeled by an element of the group $\mathbb{Z}.$ We denote by $\widehat{B_{n}}$ the set of all colored Brauer $n$-diagrams. Two labelings are regarded as the same if the orientation of a strand is reversed and the group element associated to the strand is inverted. Note that we only consider these diagrams without closed loops inside the frame.

Given $n\in \mathbb{N},$ let $R=\mathbb{Z}[\delta_{0}, \delta_{1},\ldots]$ be the ring with infinite variables $\delta_{0}, \delta_{1}, \ldots$ over the integers. We now define the affine Brauer algebra $\widehat{B}_{n, R}$ over $R.$ As a vector space, $\widehat{B}_{n, R}$ is the $R$-span of all colored Brauer $n$-diagrams. We define the product $x\cdot y$ of two colored Brauer $n$-diagrams $x$ and $y$ using the concatenation of $x$ above $y,$ where we identify the bottom nodes of $x$ and the top nodes of $y.$ More precisely, we first choose compatible orientations of the strands of $x$ and $y.$ Then we concatenate the diagrams and add the labels on each strand of the new diagram. Any closed loop in this new diagram can be oriented such that as the strand passes through the leftmost central node in the loop it points downwards. If this oriented loop is labeled by $\pm i$ for some $i\in \mathbb{N},$ then the diagram is set to be equal to $\delta_{i}$ times the same diagram with the loop moved.

Let $z$ be the colored Brauer $n$-diagram obtained by removing all the closed loops, and let $m_{i}$ be the number of deleted loops with labels $\pm i$ for each $i\in \{0, 1, 2,\ldots\},$ then we define $x\cdot y= (\delta_{0}^{m_{0}}\delta_{1}^{m_{1}}\delta_{2}^{m_{2}}\cdots) c.$

The affine Brauer algebra $\widehat{B}_{n, R}$ has an explicit presentation as follows. It can also be defined as the $R$-algebra generated by $\{s_i, e_i, t_j|~1\leq i\leq n-1,~ \mathrm{and}~ 1\leq j\leq n\}$ subject to the following relations:\vskip2mm

\hspace*{-0.5cm}(a) $s_{i}^{2}=1,$ for $1\leq i\leq n-1.$~~~~~~~~~~~~~~~~~~(b) $s_{i}s_{j}=s_{j}s_{i},$ if $|i-j|> 1.$

\hspace*{-0.5cm}(c) $s_{i}s_{i+1}s_{i}=s_{i+1}s_{i}s_{i+1},$ for $1\leq i< n-1.$

\hspace*{-0.5cm}(d) $s_{i}t_{j}=t_{j}s_{i},$ if $j\neq i, i+1.$~~~~~~~~~~~~~~~~~~~(e) $s_{i}e_{j}=e_{j}s_{i},$ if $|i-j|> 1.$

\hspace*{-0.5cm}(f) $e_{i}e_{j}=e_{j}e_{i},$ if $|i-j|> 1.$~~~~~~~~~~~~~~~~~~~(g) $e_{i}t_{j}=t_{j}e_{i},$ if $j\neq i, i+1.$

\hspace*{-0.5cm}(h) $t_{i}t_{j}=t_{j}t_{i},$ for $1\leq i,j\leq n.$~~~~~~~~~~~~~~~~(i) $s_{i}t_{i}=t_{i+1}s_{i},$ for $1\leq i< n.$

\hspace*{-0.5cm}(j) $e_{i}s_{i}=e_{i}=s_{i}e_{i},$ for $1\leq i\leq n-1.$

\hspace*{-0.5cm}(k) $s_{i}e_{i+1}e_{i}=s_{i+1}e_{i},$ for $1\leq i\leq n-2.$

\hspace*{-0.5cm}(l) $e_{i+1}e_{i}s_{i+1}=e_{i+1}s_{i},$ for $1\leq i\leq n-2.$~~~~(m)$e_{i}e_{j}e_{i}=e_{i},$ if $|i-j|= 1.$

\hspace*{-0.5cm}(n) $e_{i}t_{i}t_{i+1}=e_{i}=t_{i}t_{i+1}e_{i},$ for $1\leq i\leq n-1.$

\hspace*{-0.5cm}(o) $e_{i}t_{i}^{a}e_{i}=\delta_{a}e_{i},$ for $a\geq 0$ and $1\leq i\leq n-1.$\vskip2mm

One can prove that the two definitions of $\widehat{B}_{n, R}$ are equivalent by the arguments similar to those for BMW algebras in [MW].

\section{Affine cellularity of affine Brauer algebras}
In this section, we will show that $\widehat{B}_{n, R}$ is an affine cellular algebra.\vskip2mm

\hspace*{-0.5cm}$\mathbf{Definition~4.1.}$ Suppose that $n, k\in \mathbb{N}.$ An flat $(n, k)$-dangle is a partition of $\{1,2,\ldots,n\}$ into $k$ one-element subsets, and $(n-k)/2$ two-element subsets, here $k$ must be a natural number in $\{n,n-2,n-4,\ldots\}.$ An $(n,k)$-dangle is a flat $(n,k)$-dangle to which an integer $r\in \mathbb{Z}$ has been assigned to every subset of size 2.

We can represent a flat $(n,k)$-dangle $d$ by a set of $n$ nodes labeled by the set $\{1,2,\ldots,n\},$ where there is an arc joining $i$ to $j$ if $\{i, j\}\in d$, and there is a vertical line starting from $i$ if $\{i\}\in d.$ An $(n, k)$-dangle can be represented graphically by first labeling each arc of the underlying $(n, k)$-dangle and then giving it the following orientation: we let all one-element sets have a downward orientation and all two-element sets have a right orientation.

We denote by $D(n,k)$ the set of all $(n,k)$-dangles, and let $V(n,k)$ denote the vector space spanned by all $(n, k)$-dangles. We also define $V'(n, k)$ be a copy of $V(n, k),$ but draw the pictures dangling upward rather than down, and label the vertices by $1', 2',\ldots, n'.$ For each element $d\in D(n, k)\subset V(n, k),$ let $d'\in V'(n, k)$ be the corresponding element. For each $k\in \mathbb{N},$ we denote by $A_{k}$ the group algebra of the wreath product of $\mathbb{Z}$ and the symmetric group $\Sigma_{k}.$

\begin{lemma}
For each element $d\in \widehat{B_{n}},$ we can write $d$ uniquely as an element in $V(n,k)\otimes_{R} V'(n,k)\otimes_{R} A_{k}$ for some $k\in \mathbb{N}.$
\end{lemma}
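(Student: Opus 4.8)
The plan is to define an explicit map sending a colored Brauer $n$-diagram $d$ to a triple $(d_1, d_2, w)$ lying in $V(n,k)\otimes_{R} V'(n,k)\otimes_{R} A_k$, and then to show that this map is a bijection between $\widehat{B_n}$ and the set of such pure-tensor basis elements. First I would set $k:=t(d)$, the number of through strings of $d$; this value is forced to lie in $\{n, n-2, n-4, \ldots\}$ since the $k$ through strings occupy $k$ of the $n$ top nodes and $k$ of the $n$ bottom nodes, leaving $(n-k)/2$ arcs among the top nodes and $(n-k)/2$ arcs among the bottom nodes. I would then cut $d$ into three pieces: the arcs among the top nodes, together with their labels, form an $(n,k)$-dangle $d_1\in V(n,k)$, the $k$ top endpoints of the through strings being recorded as the $k$ one-element subsets; the arcs among the bottom nodes similarly form $d_2\in V'(n,k)$; and the $k$ through strings contribute an element $w=(\underline{m},\sigma)$ of the wreath product $\mathbb{Z}\wr\Sigma_k$, whose symmetric-group part $\sigma$ is the bijection matching the ordered top singletons $i_1<\cdots<i_k$ to the ordered bottom singletons, and whose $\mathbb{Z}^{k}$-part $\underline{m}=(m_1,\ldots,m_k)$ records the label carried by each of the $k$ through strings.

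Before this is meaningful, I must check that the triple does not depend on the representative chosen for $d$ within its labeling class, the only ambiguity being that reversing the orientation of a strand inverts the associated element of $\mathbb{Z}$. Here I would invoke the orientation conventions built into Definition~4.1: in a dangle every two-element subset (arc) is given the rightward orientation and every one-element subset (an endpoint of a through string) is given the downward orientation. Fixing once and for all the rightward orientation for all arcs and the downward orientation for all through strings pins down a canonical representative, so that each of the $n$ strands of $d$ carries a well-defined label. A quick count then confirms that all $n$ labels are accounted for exactly once, namely $(n-k)/2$ of them in $d_1$, $(n-k)/2$ in $d_2$, and $k$ in $\underline{m}$, with no overlap.

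To conclude I would construct the inverse map and verify that the two maps are mutually inverse, which yields both existence and uniqueness of the claimed expression. Given a triple $(d_1, d_2, (\underline{m},\sigma))$ I reassemble a colored Brauer $n$-diagram by drawing the top arcs prescribed by $d_1$, the bottom arcs prescribed by $d_2$, and a through string from the $p$-th top singleton to the $\sigma(p)$-th bottom singleton carrying the label $m_p$. Since cutting and reassembling are visibly inverse operations once the orientation conventions have been fixed, this exhibits the desired unique decomposition. The one point that genuinely requires care, and the only real obstacle, is the label bookkeeping under the equivalence ``reverse orientation inverts the label'': one must check that the fixed conventions interact consistently across the top arcs, the bottom arcs (which dangle upward in $V'(n,k)$), and the through strings, so that the assignment of labels to $d_1$, $d_2$, and $\underline{m}$ is forced and no label is lost or counted twice. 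Everything else is a routine diagrammatic verification.
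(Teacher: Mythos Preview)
Your proposal is correct and follows essentially the same approach as the paper: set $k=t(d)$, cut the diagram into a top $(n,k)$-dangle, a bottom $(n,k)$-dangle, and the labeled permutation in $A_k$ coming from the through strings, then observe that this assignment is invertible. The paper's own proof is terser and does not spell out the orientation/label bookkeeping you highlight, but the underlying decomposition is identical.
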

\begin{proof}
Suppose that there are $k$ vertical lines in $d\in \widehat{B_{n}}.$ Then we have two $(n, k)$-dangles $d_1, d_2'$ (by cutting off all vertical lines) and an element $\widetilde{d}\in A_{k}.$ These data $(d_1, d_2',\widetilde{d})$ are uniquely determined by $d,$ where $\widetilde{d}$ is obtained in the following way: we can easily get a permutation $\pi(d)\in \Sigma_{k},$ if we also consider the labels on the k vertical lines then we get an element of $A_{k}$, which is just $\widetilde{d}$. Thus we can write $d$ as $d_1\otimes d_2'\otimes \widetilde{d}$. Clearly, if we are given $d_1,d_2'\in D(n,k),$ and $\widetilde{d}\in A_{k},$ we have a unique element $d\in \widehat{B_{n}}$ with the data $(d_1, d_2', \widetilde{d}).$
\end{proof}

Now we want to define an $R$-linear form $\varphi_{k}$ from $V'(n,k)\otimes_{R} V(n, k)$ to $A_{k}$. Given two basis elements $d_1'\in V'(n,k), d_2\in V(n, k).$ Then by Lemma 4.1, we can form the element $X_{j} :=d_j\otimes d_j'\otimes 1$ in $V(n, k)\otimes_{R} V'(n,k)\otimes_{R} A_{k}$ for $j=1,2$. Suppose that the product $X_1\cdot X_2$ is expressed as an $R$-linear combination of the basis elements in $\{d|~ d\in \widehat{B_{n}}\},$ say $\sum_{j}f_{j}' c_{j},$ where $f_{j}'\in R.$ It is clear from the multiplication in $\widehat{B}_{n, R}$ that this expression of $X_1\cdot X_2$ can be written as $\sum f_{i}c_i+a$ with $t(c_{i})=k$ and $a$ in the $R$-spanning of those basis elements $c$ with $t(c)< k.$ Now we rewrite $c_{i}$ as the form $c_{i,1}\otimes c_{i,2}'\otimes \widetilde{c_{i}}.$ Moreover, for those $c_{i}$ with $t(c_{i})=k,$ we know from the concatenation of two tangles that they are of the form $d_1\otimes d_2'\otimes \widetilde{c_{i}}.$ Here $X_1\cdot X_2$ can be written as $\sum d_1\otimes d_2'\otimes f_{i} \widetilde{c_{i}}+a.$ Now we can define $\varphi_{k}(d_1', d_2)$: $=\sum_{i} f_{i}\widetilde{c_{i}}\in A_{k}.$

Now we define $J_{k}$ to be the $R$-module generated by basis elements $d$ with $t(d)\leq k.$ It is clear that $J_{k}\subset J_{k+1}$ and $J_{k}$ is an ideal in $\widehat{B}_{n, R}$ for all $k.$
\begin{lemma}
If $c=c_1\otimes c_2'\otimes \widetilde{c}$ and $d=d_1\otimes d_2'\otimes \widetilde{d}$ with $c_1, d_1\in V(n, k),$ $c_2', d_2'\in V'(n, k)$ and $\widetilde{c}, \widetilde{d}\in A_{k},$ then $c\cdot d=c_1\otimes d_2'\otimes \widetilde{c}\varphi_{k}(c_2', d_1)\widetilde{d}$ $($$\mathrm{mod}$ $J_{k-2}$$).$
\end{lemma}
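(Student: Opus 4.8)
The plan is to track what happens to the through-strings when we concatenate $c$ and $d$, and to show that the factorization $c_1 \otimes d_2' \otimes \widetilde{c}\varphi_k(c_2',d_1)\widetilde{d}$ records exactly the top-of-$c$, bottom-of-$d$, and the labeled permutation arising in the middle, up to lower through-string terms. First I would recall from Lemma 4.1 that writing $c = c_1 \otimes c_2' \otimes \widetilde{c}$ means: $c_1 \in V(n,k)$ is the $(n,k)$-dangle obtained by cutting the vertical lines at the top of $c$, the element $c_2' \in V'(n,k)$ is the dangle at the bottom, and $\widetilde{c} \in A_k$ records the permutation together with the $\mathbb{Z}$-labels carried by the $k$ through-strings. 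The key observation is that in the concatenation $c \circ d$, any through-string of the product must travel from a top node of $c$ through a through-string of $c$, across the identified middle row, and down a through-string of $d$ to a bottom node of $d$. Hence the top dangle of $c\cdot d$ can only involve the arcs of $c_1$ (plus possibly new arcs formed in the middle), and symmetrically for $d_2'$; the genuinely new combinatorial data live entirely in the middle strip where $c_2'$ meets $d_1$.

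The central step is to isolate that middle interaction. I would argue that, modulo through-strings being destroyed (which lands us in $J_{k-2}$, since killing one through-string pair drops $t$ by at least $2$), the product $c \cdot d$ has the form $c_1 \otimes d_2' \otimes (\text{something in } A_k)$, and that the something is precisely $\widetilde{c}$ followed by the middle contribution followed by $\widetilde{d}$. To identify the middle contribution with $\varphi_k(c_2', d_1)$, I would appeal directly to the definition of $\varphi_k$: recall $\varphi_k(c_2', d_1)$ was defined by forming $X_1 = d_1 \otimes d_1' \otimes 1$ and $X_2 = c_2 \otimes c_2' \otimes 1$ (the dangles glued with trivial $A_k$-part) — more carefully, by forming the products $X_j = d_j \otimes d_j' \otimes 1$ and reading off the coefficient of the top-degree part of $X_1 \cdot X_2$. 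The point is that $\varphi_k$ was engineered so that it captures exactly the labeled permutation produced when a bottom dangle of the shape $c_2'$ is concatenated above a top dangle of the shape $d_1$. So the plan reduces to checking that the middle strip of $c \circ d$ is combinatorially identical to the configuration defining $\varphi_k(c_2', d_1)$, after which the factors $\widetilde{c}$ and $\widetilde{d}$ simply pre- and post-compose in $A_k$ because the labels and permutations on the through-strings of $c$ and $d$ act independently on the two ends.

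Concretely, I would carry out the steps in this order. Step one: using Lemma 4.1, write $c \circ d$ in its unique form and observe that its top-degree ($t = k$) part has top dangle $c_1$ and bottom dangle $d_2'$ — this is because a surviving through-string of $c\cdot d$ uses one through-string of $c$ and one of $d$, leaving the arc structures $c_1$ and $d_2'$ untouched at the two outer rows. Step two: verify that the $A_k$-element attached to this top-degree part factors as $\widetilde{c} \cdot (\text{middle}) \cdot \widetilde{d}$, where the middle element comes only from how the through-strings of $c$ reconnect to those of $d$ through the dangles $c_2'$ and $d_1$, including the accumulated $\mathbb{Z}$-labels. Step three: match this middle element with $\varphi_k(c_2', d_1)$ by comparing with the defining computation of $\varphi_k$, noting that setting $\widetilde{c} = \widetilde{d} = 1$ in our product recovers exactly $X_1 \cdot X_2$ from that definition. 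Step four: confirm that everything else — namely the terms where some pair of through-strings gets connected into an arc or a deleted loop — has $t < k$, hence lies in $J_{k-2}$, which justifies the congruence.

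I expect the main obstacle to be Step two and Step three together: making rigorous the claim that the $A_k$-part factors cleanly as $\widetilde{c}\,\varphi_k(c_2',d_1)\,\widetilde{d}$, rather than mixing the labels and permutations in some entangled way. The subtlety is bookkeeping of orientations and $\mathbb{Z}$-labels: in the affine Brauer algebra one must choose compatible orientations before concatenating and invert labels when reversing orientation, so one has to check that the label arithmetic on each through-path (top label from $\widetilde{c}$, plus the middle contribution, plus the bottom label from $\widetilde{d}$) adds up correctly and is consistent with the wreath-product multiplication in $A_k = \mathbb{Z} \wr \Sigma_k$. Once the orientation conventions are pinned down and one verifies that no closed loop is created among the surviving through-strings (so that no extra $\delta_i$-factor contaminates the top-degree part), the factorization should follow from the associativity of concatenation and the definition of $\varphi_k$.
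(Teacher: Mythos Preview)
Your proposal is correct and follows essentially the same strategy as the paper: isolate the middle interaction $c_2'\cdot d_1$, identify its top-degree part with $\varphi_k(c_2',d_1)$, and observe that the outer dangles $c_1,d_2'$ and the $A_k$-factors $\widetilde{c},\widetilde{d}$ simply flank this middle piece, with all lower-through-string contributions falling into $J_{k-2}$. The paper packages this slightly more compactly by writing $c\cdot d$ as the tangle concatenation $c_1\cdot\widetilde{c}\cdot c_2'\cdot d_1\cdot\widetilde{d}\cdot d_2'$ and noting that the $(k,k)$-tangle $c_2'\cdot d_1$ equals $I_k\otimes I_k\otimes\varphi_k(c_2',d_1)+a'$ with $a'\in J_{k-2}$, which is exactly your ``middle strip'' argument in tangle language.
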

\begin{proof}
We may consider $c_2'$ as a $(k, n)$-dangle, and we now consider the concatenation of the two tangles $c_2'$ and $d_1,$ this gives us a $(k, k)$-dangle $T$. If we write this tangle $T$ as a linear combination of the basis elements $\{x|~ x\in \widehat{B_{k}}\},$ then we see that $T=I_{k}\otimes I_{k}\otimes \varphi_{k}(c_2', d_1)+a'$, where $\varphi_{k}(c_2', d_1)$ is the bilinear form defined as above, $a'\in J_{k-2}$ and $I_{k}=id_{k}$ is the $(k, k)$-dangle with $k$ vertical strands. So the product $c\cdot d$ is formed by a series of concatenations: $c_1\cdot \widetilde{c}\cdot c_2'\cdot d_1\cdot \widetilde{d} \cdot d_2'.$ Thus we have $c\cdot d=c_1\cdot \widetilde{c}\cdot \varphi_{k}(c_2', d_1)\cdot \widetilde{d} \cdot d_2'+a$ with $a\in J_{k-2}.$ This implies that $$c\cdot d\equiv c_1\otimes d_2'\otimes \widetilde{c}\varphi_{k}(c_2', d_1)\widetilde{d} ~~~~(\mathrm{mod}~J_{k-2}).$$ This proved the lemma.
\end{proof}

By Lemma 4.1, we have an $R$-module decomposition: $\widehat{B}_{n, R}=V(n,n)\otimes V'(n,n)\otimes A_{n}\oplus V(n,n-2)\otimes V'(n,n-2)\otimes A_{n-2}\oplus V(n,n-4)\otimes V'(n,n-4)\otimes A_{n-4}\oplus \cdots \oplus V(n,\varepsilon)\otimes V'(n,\varepsilon)\otimes A_{\varepsilon},$ where $\varepsilon$ is zero if $n$ is even, and $1$ if $n$ is odd. The following lemma tells us how to get an ideal in $\widehat{B}_{n, R}$ from an ideal in the group algebras.
\begin{lemma}
Let $I$ be an ideal in $A_{k}.$ Then $J_{k-2}+V(n, k)\otimes V'(n,k)\otimes I$ is an ideal in $\widehat{B}_{n, R}.$
\end{lemma}
\begin{proof}
To prove the lemma, it is sufficient to show that for each $d=d_1\otimes d_2'\otimes \widetilde{d}$ with $d\in \widehat{B_{n}}$ and $t(d)=l> k,$ and $c=c_1\otimes c_2'\otimes \widetilde{c}$ with $c\in \widehat{B_{n}}$ and $t(c)=k,$ the following property holds: $$(d_1\otimes d_2'\otimes \widetilde{d})(c_1\otimes c_2'\otimes \widetilde{c})\equiv b\otimes c_2'\otimes a\widetilde{c}~~~~(\mathrm{mod}~J_{k-2})$$ for some $b\in V(n,k);$ and the element $a$ is an element in $A_{k}$ which is independent of $\widetilde{c}.$ This property follows again by considering the composition of tangles as in the proof of Lemma 4.2. In fact, the concatenation $d_1\cdot \widetilde{d}\cdot d_{2}'\cdot c_1$ is an $(n, m)$-tangle with $m\leq k.$ If $m< k,$ then the composition of $d$ and $c$ is in $J_{k-2},$ so we are done. Suppose that $m=k,$ then this concatenation $d_1\cdot \widetilde{d}\cdot d_{2}'\cdot c_1$ is of the form $b\cdot a+a',$ where $a'$ is a linear combination of $(n, j)$-tangles with $j< k.$ If we concatenate this further with $\widetilde{c}\cdot c_{2}',$ then we get the desired statement.
\end{proof}

It is well-known how to define an involution $i$ on $\widehat{B}_{n, R}$: $i$ is given by reflection of a diagram through its horizontal axis. Note also that there is an involution $\sigma$ on $A_{n}$ defined by $\sigma(w)=w^{-1}$ for any $w\in W,$ where $W$ is the extended affine Weyl group of type $A_{n-1}.$ The following lemma easily follows from the fact that the extended affine Hecke algebra of type $A$ is affine cellular as is shown in [KX3] and from [KX3, Lemma 2.4].
\begin{lemma}
The group algebra $A_{n}$ over $\mathbb{Z}$ is an affine cellular $\mathbb{Z}$-algebra with respect to the involution $\sigma.$
\end{lemma}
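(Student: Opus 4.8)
The plan is to obtain $A_n$ as a specialization of a Hecke algebra that is already known to be affine cellular. First I would recall the identification of the underlying group: the extended affine Weyl group $W$ of type $A_{n-1}$ is exactly the wreath product $\mathbb{Z} \wr \Sigma_n = \mathbb{Z}^n \rtimes \Sigma_n$, so that $A_n$ is the integral group algebra $\mathbb{Z}[W]$. The associated extended affine Hecke algebra $\mathcal{H}$ of type $A$, defined over $\mathbb{Z}[q, q^{-1}]$, is affine cellular with respect to the involution $T_w \mapsto T_{w^{-1}}$; this is precisely the class of examples recalled in the introduction (see [KX2]).

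Next I would record how $A_n$ sits inside this picture. Sending $q \mapsto 1$ defines a ring homomorphism $\mathbb{Z}[q, q^{-1}] \to \mathbb{Z}$, and under the induced base change the quadratic Hecke relations collapse to the Coxeter relations, giving a canonical isomorphism
\[
\mathcal{H} \otimes_{\mathbb{Z}[q, q^{-1}]} \mathbb{Z} \;\cong\; \mathbb{Z}[W] \;=\; A_n .
\]
Since $T_w$ is sent to the group element $w$, the Hecke involution $T_w \mapsto T_{w^{-1}}$ descends exactly to the involution $\sigma$ given by $w \mapsto w^{-1}$.

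It then remains to transport affine cellularity along this specialization, which is where [KX3, Lemma 2.4] enters: it ensures that affine cellularity passes to base change along a homomorphism of noetherian commutative ground rings. Concretely, one tensors the affine cell chain of $\mathcal{H}$, together with its free modules $V_j$, affine algebras $B_j$, and bimodule isomorphisms $\alpha_j$ from Definition 2.1, with $\mathbb{Z}$ over $\mathbb{Z}[q, q^{-1}]$. Because the $V_j$ are free and the $B_j$ are affine algebras, this preserves the direct-sum decomposition, the chain of two-sided ideals, and the commuting diagrams, so the specialized data form an affine cell chain for $A_n$ compatible with $\sigma$.

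I do not expect any serious obstacle here; the one point deserving care is checking that the specialization map meets the hypotheses of [KX3, Lemma 2.4] and that the descended involution really is $\sigma$, both of which follow from the freeness of the cell data and the explicit action $T_w \mapsto T_{w^{-1}}$. Granting this, the lemma is immediate.
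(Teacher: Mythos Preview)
Your approach is exactly the one the paper indicates: the paper simply remarks that the lemma follows from the affine cellularity of the extended affine Hecke algebra of type $A$ established in [KX3] together with [KX3, Lemma 2.4], and your proposal spells out precisely this specialization argument (identify $A_n$ with $\mathbb{Z}[W]$, realize it as the $q\mapsto 1$ base change of $\mathcal{H}$, and apply the base-change lemma). The only minor point is that the relevant reference for affine cellularity of $\mathcal{H}$ should be [KX3] rather than [KX2].
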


The following lemma describes the effect of $i$ on an basis element of $\widehat{B}_{n, R}$ and $\sigma$ on the bilinear form $\varphi_{k},$ whose proof can be seen from the definition of $i$ and the tangle concatenations.
\begin{lemma}
$(1)$~ If $d=d_1\otimes d_2'\otimes \widetilde{d},$ then $i(d)=d_2\otimes d_1'\otimes \sigma(\widetilde{d}).$

$(2)$~ The involution $\sigma$ on $A_{k}$ and the bilinear form $\varphi_{k}$ have the following property: $\sigma\varphi_{k}(c', d)=\varphi_{k}(d', c)$ for all $c, d\in D(n,k).$
\end{lemma}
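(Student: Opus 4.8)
The plan is to establish part (1) by a direct geometric analysis of the reflection and then to deduce part (2) formally from part (1) together with the fact that $i$ is an anti-automorphism.

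For part (1), I would track what reflection through the horizontal axis does to the three pieces of data $(d_1, d_2', \widetilde{d})$ furnished by Lemma 4.1. Reflection interchanges the top row $1,\dots,n$ with the bottom row $1',\dots,n'$. Consequently the system of arcs among the top vertices, which after cutting the vertical lines is recorded by $d_1\in D(n,k)$, is carried to a system of arcs among the bottom vertices, that is, to the dangle $d_1'\in V'(n,k)$; symmetrically the bottom arcs $d_2'$ are carried to the top and become $d_2\in V(n,k)$. It remains to see that the $k$ through-strings contribute $\sigma(\widetilde{d})$. Reflection reverses each through-string, so that its top endpoint becomes a bottom endpoint and conversely, which inverts the underlying permutation $\pi(d)\in\Sigma_k$; and reversing the orientation of a strand inverts, by the labeling convention for colored Brauer diagrams, the integer attached to it. Since $A_k$ is the group algebra of the wreath product $\mathbb{Z}\wr\Sigma_k$, the simultaneous inversion of the permutation part and of the $\mathbb{Z}$-labels is exactly the group inverse, namely $\widetilde{d}\mapsto\widetilde{d}^{-1}=\sigma(\widetilde{d})$. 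This would give $i(d)=d_2\otimes d_1'\otimes\sigma(\widetilde{d})$.

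For part (2), I would fix $c,d\in D(n,k)$ and form the symmetric basis elements $X_1=c\otimes c'\otimes 1$ and $X_2=d\otimes d'\otimes 1$, which by part (1) satisfy $i(X_1)=X_1$ and $i(X_2)=X_2$. By the definition of $\varphi_k$ we have $X_1\cdot X_2\equiv c\otimes d'\otimes\varphi_k(c',d)\pmod{J_{k-2}}$ and likewise $X_2\cdot X_1\equiv d\otimes c'\otimes\varphi_k(d',c)\pmod{J_{k-2}}$. Applying the anti-automorphism $i$ to the first congruence and using $i(X_1\cdot X_2)=i(X_2)\,i(X_1)=X_2\cdot X_1$, together with part (1) applied term-by-term to $c\otimes d'\otimes\varphi_k(c',d)$ after writing $\varphi_k(c',d)$ as an $R$-combination of group elements of $A_k$, would yield $X_2\cdot X_1\equiv d\otimes c'\otimes\sigma(\varphi_k(c',d))\pmod{J_{k-2}}$; here I use that $i$ preserves $J_{k-2}$, since reflection does not change the number of through-strings. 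Comparing the two expressions for $X_2\cdot X_1$ and invoking the uniqueness of the decomposition in Lemma 4.1, so that the $t=k$ component $d\otimes c'\otimes(\,\cdot\,)$ is well defined modulo $J_{k-2}$, would give $\varphi_k(d',c)=\sigma(\varphi_k(c',d))$, which is the asserted identity.

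The main obstacle is the verification carried out in part (1): that the geometric effect of reflection on the colored through-strings, namely inverting the permutation and inverting each label through the orientation-reversal convention, coincides precisely with the algebraic involution $\sigma$, that is, with taking inverses in the wreath product $\mathbb{Z}\wr\Sigma_k$. Once this identification is pinned down, part (2) is a formal consequence of $i$ being an anti-automorphism and of the multiplicativity encoded in $\varphi_k$, requiring only the routine observations that $i$ fixes the diagonal elements $X_j$ and preserves the filtration by the number of through-strings.
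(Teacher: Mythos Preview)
Your argument is correct and consistent with the paper, which offers only the one-line remark that the lemma ``can be seen from the definition of $i$ and the tangle concatenations.'' Your treatment of part~(1) is exactly the intended unpacking of ``the definition of $i$.'' For part~(2) the paper's phrase ``tangle concatenations'' points to the marginally more direct geometric observation that reflecting the middle concatenation $c'\!\cdot d$ (a $(k,k)$-tangle) through the horizontal axis produces the concatenation $d'\!\cdot c$, so the $A_k$-component of the first is carried by $\sigma$ to that of the second; your route via $i(X_1X_2)=i(X_2)i(X_1)=X_2X_1$ and comparison of $t=k$ components is simply an algebraic repackaging of this same reflection and is equally valid.
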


Combined with Lemma 2.1, 4.2, 4.3, 4.4 and 4.5, we have proved the main result of this note.

\begin{theorem}
The affine Brauer algebra $\widehat{B}_{n, R}$ over a noetherian domain $R$ is an affine cellular algebra with respect to the involution $i.$
\end{theorem}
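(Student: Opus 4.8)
The plan is to deduce Theorem 4.1 directly from the iterated-inflation criterion of Lemma 2.1, feeding it the data assembled in Lemmas 4.1 through 4.5. Concretely, I would take the base ring to be the noetherian domain $R$, index the layers by the through-string count $k\in\{\varepsilon,\varepsilon+2,\ldots,n\}$ in increasing order, and set $V_j:=V(n,k)$ (with $V'(n,k)$ playing the role of the second tensor copy of $V_j$), $B_j:=A_k$, $\sigma_j:=\sigma$, and $\phi_j:=\varphi_k$. The $R$-module decomposition $\widehat{B}_{n,R}=\bigoplus_k V(n,k)\otimes V'(n,k)\otimes A_k$ recorded just before Lemma 4.3 is exactly the decomposition demanded by Lemma 2.1, and the chain $J_k$ (spanned by diagrams with at most $k$ through-strings) supplies the filtration. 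Since the number of through-strings changes in steps of two, the sum of all lower layers $J_{j-1}$ appearing in Lemma 2.1 is realised here by $J_{k-2}$.

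The bulk of the proof is then a checklist matching each hypothesis of Lemma 2.1 to one of the preceding lemmas. First, each $A_k$ is affine cellular with involution $\sigma$: this is Lemma 4.4 together with an extension of scalars from $\mathbb{Z}$ to $R$. Second, the involution $i$ restricts on each layer to the map $w\otimes v\otimes b\mapsto v\otimes w\otimes\sigma(b)$, which is precisely Lemma 4.5(1), computing $i(d_1\otimes d_2'\otimes\widetilde d)=d_2\otimes d_1'\otimes\sigma(\widetilde d)$. Third, the symmetry $\sigma_j(\phi_j(w,v))=\phi_j(v,w)$ of the form is Lemma 4.5(2). Fourth, the requirement that multiplication be governed by $\phi_j$ modulo the lower layers, namely $(x\otimes y\otimes b)(u\otimes v\otimes c)\equiv x\otimes v\otimes b\,\varphi_k(y,u)\,c$ modulo $J_{k-2}$, is exactly the content of Lemma 4.2. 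Finally, the condition that $V(n,k)\otimes V'(n,k)\otimes J^{(k)}_l+J_{k-2}$ be a two-sided ideal for every cell ideal $J^{(k)}_l$ of $A_k$ is supplied by Lemma 4.3 with $I=J^{(k)}_l$. With all hypotheses verified, Lemma 2.1 yields that $\widehat{B}_{n,R}$ is affine cellular with respect to $i$, which is the assertion of Theorem 4.1.

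I expect the main obstacle to lie not in the diagram combinatorics, which are handled by Lemmas 4.1--4.3, but in the two bookkeeping points underlying the application of Lemma 2.1. The first is confirming that each inflation factor $V(n,k)$ is a free $R$-module of finite rank, as Lemma 2.1 insists; this amounts to checking that the dangle data contribute only finitely many basis vectors per layer, with the $\mathbb{Z}$-colourings absorbed into the affine factor $A_k$ rather than into $V(n,k)$. The second, and genuinely necessary, step is the passage from Lemma 4.4, which establishes affine cellularity of $A_k$ over $\mathbb{Z}$, to affine cellularity over the arbitrary noetherian domain $R$: one must verify that the defining data of an affine cell ideal (the free module, the affine algebra, and the bimodule isomorphism with its commuting diagram) survive the extension of scalars $-\otimes_{\mathbb{Z}}R$. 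Once these are in hand, the verification that the inflated map $\beta$ constructed in the proof of Lemma 2.1 is an $A$-bimodule isomorphism making the required diagram commute is routine, following that argument verbatim.
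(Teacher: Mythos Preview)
Your proposal is correct and follows exactly the paper's approach: the paper's entire proof of Theorem 4.1 is the single sentence ``Combined with Lemma 2.1, 4.2, 4.3, 4.4 and 4.5, we have proved the main result of this note,'' and your write-up simply spells out which hypothesis of Lemma 2.1 is discharged by which of Lemmas 4.1--4.5. The two bookkeeping issues you flag (finite rank of $V(n,k)$ and base change of Lemma 4.4 from $\mathbb{Z}$ to $R$) are not addressed explicitly in the paper either, so you are being more careful, not less.
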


Let now $A$ be an affine cellular algebra with a cell chain $0=J_{0}\subset J_{1}\subset J_{2}\subset\cdots\subset J_{n}=A$, such that each subquotient $J_{i}/J_{i-1}$ is an affine cell ideal of $A/J_{i-1}$. Then $J_{i}/J_{i-1}$ is isomorphic to $\mathbb{A}(V_{i},B_{i},\varphi_{i})$ for some free $k$-module $V_{i}$ of finite rank, an affine $k$-algebra $B_{i}$ and a $k$-bilinear form $\varphi_{i} :V_{i}\otimes_{k} V_{i}\rightarrow B_{i}.$ Let $(\varphi_{st}^{i})$ be the matrix representing the bilinear form $\varphi_{i}$ with respect to some choices of basis of $V_{i}.$ Then Koenig and Xi obtain a parameterisation of simple modules over an affine cellular algebra by establishing a bijection between isomorphism classes of simple $A$-modules and the set $$\{(j, m)| 1\leq j\leq n, m\in \mathrm{MaxSpec(B_{j})~ such~ that~ some} ~\varphi_{st}^{i}\notin \mathrm{m}\},$$
where $\mathrm{MaxSpec(B_{j})}$ denotes the maximal ideal spectrum of $B_{j}.$



Mathematical Sciences Center, Tsinghua University, Jin Chun Yuan West Building

Beijing, 100084, P. R. China.

E-mail address: cwdeng@amss.ac.cn

\end{document}